\newtheorem{thm}{Theorem}[section]
\newtheorem{lem}[thm]{Lemma}
\theoremstyle{definition}
\numberwithin{equation}{section}
\begin{document}


\baselineskip=17pt


\title{Groups acting distally and minimally on $\mathbb S^2$ and $\mathbb {RP}^2$}

\author{Enhui Shi}

\address[E.H. Shi]{School of Mathematical Sciences, Soochow University, Suzhou 215006, P. R. China}
\email{ehshi@suda.edu.cn}

\author[H.~Xu]{Hui Xu}
\address[H. Xu]{CAS Wu Wen-Tsun Key Laboratory of Mathematics, University of Science and Technology of China, Hefei, Anhui 230026, China}
\email{huixu2734@ustc.edu.cn}

\begin{abstract}
Let $X$ be the $2$-sphere $\mathbb S^2$ or the real projective plane $\mathbb {RP}^2$. We show that if $\Gamma$ is a finitely generated group acting minimally and distally on $X$, then $\Gamma$ contains a nonabelian free subgroup.
\end{abstract}

\keywords{distality, amenable group, free group, minimality, cohomology}
\subjclass[2010]{37B05}

\maketitle

\pagestyle{myheadings} \markboth{E. H. Shi \& Hui Xu }{Distal minimal group actions}

\section{Introduction}

The aim of the note is  continuing the study of the following question:
\medskip

\centerline{\it \ \ \ \ Given a discrete group $G$ and a compact metric space $X$, }
\centerline {\it can $G$ act on $X$ distally and minimally?}
\medskip

\noindent The answer to this question involves the discussions around the algebraic structure of $G$ and the topology of $X$.
Here, we are mainly concerned on the case that $X$ is a closed surface (a compact connected $2$-manifold with no boundary).
There have been several related investigations around this topic.  A remarkable result due to Furstenberg says that if a nontrivial space $X$ admits a
 distal minimal action by a locally compact abelian group, then $X$ cannot be simply connected (see \cite[Theorem 11.1]{Fu} or \cite[Chapter 7-Theorem 16]{Au}).
  In \cite{Shi}, Shi proved further that if a continuum $X$ admits a distal minimal amenable group action, then
  the first {\v C}ech cohomology group $\check H^1(X)$ of $X$ with integer coefficients is nontrivial; in particular, if
  $X$ is a CW-complex, then the fundamental group of $X$ cannot be finite;
  so, the $2$-sphere $\mathbb S^2$ and the real projective plane $\mathbb {RP}^2$ admit no distal minimal actions by amenable groups.
Bron{\v s}te{\v \i}n proved that if $X$ is a connected and locally connected finitely dimensional compact metric space  which admits a distal minimal group action,
then $X$ must be a manifold and the fundamental group $\pi_1(X)$ is virtually nilpotent (\cite{Br}); this implies that if $X$ is a closed surface
except for the sphere $\mathbb S^2$, the real projective plane $\mathbb {RP}^2$, the torus $\mathbb T^2$, and the Klein bottle $\mathbb K^2$,
then it admits no distal minimal actions by any group. Shi showed that no closed surface admits a distal minimal action by $SL(n, \mathbb Z)$ with $n\geq 3$ (\cite{Sh}).

\medskip

The following is the main theorem of this paper. Recall that a group $G$ is a {\it small group} if it contains no free nonabelian subgroups.

\begin{thm}\label{main theorem}
Let $X$ be the $2$-sphere $\mathbb S^2$ or the real projective plane $\mathbb {RP}^2$. If $\Gamma$ is a finitely generated group acting minimally and distally on $X$, then $\Gamma$ contains a nonabelian free subgroup. Equivalently, $X$ admits no distal minimal actions by a small group.
\end{thm}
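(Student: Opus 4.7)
The plan is to proceed in three stages: reduce the $\mathbb{RP}^2$ case to the $\mathbb{S}^2$ case, show that the distal minimal action on $\mathbb{S}^2$ is equicontinuous, and close via Ker\'ekj\'art\'o's theorem, the Tits alternative, and Shi's cohomological obstruction.

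For the reduction, I would lift a minimal distal $\Gamma$-action on $\mathbb{RP}^2$ through the universal cover $p\colon \mathbb{S}^2 \to \mathbb{RP}^2$: the centralizer of the deck involution $\sigma$ in $\operatorname{Homeo}(\mathbb{S}^2)$ maps onto $\operatorname{Homeo}(\mathbb{RP}^2)$ with kernel $\langle\sigma\rangle$, so $\Gamma$ pulls back to an index-$2$ extension $\tilde\Gamma$ acting on $\mathbb{S}^2$ and containing $\sigma$. Distality lifts (since $p$ is a local isometry and $d_{\mathbb{S}^2}(z,\sigma z)$ is uniformly positive), and minimality lifts because any $\tilde\Gamma$-invariant closed set is $\sigma$-invariant, so if it surjects onto $\mathbb{RP}^2$ it must equal $\mathbb{S}^2$. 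Since containing a nonabelian free subgroup passes between finite-index groups, it suffices to treat $X = \mathbb{S}^2$.

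Granting equicontinuity, the closure $K$ of $\rho(\Gamma)$ in $\operatorname{Homeo}(\mathbb{S}^2)$ is a compact topological group acting transitively by homeomorphisms, hence a Lie group by Montgomery--Zippin, and by Ker\'ekj\'art\'o's theorem conjugate into $O(3)$. The Tits alternative applied to the finitely generated $\rho(\Gamma) \subseteq O(3)$ yields either a nonabelian free subgroup of $\rho(\Gamma)$---which lifts to $\Gamma$ because any epimorphism onto a free group splits---or a virtually solvable, hence amenable, image $\rho(\Gamma)$; the latter gives a minimal distal amenable action of $\rho(\Gamma)$ on $\mathbb{S}^2$ and contradicts Shi's theorem (since $\check H^1(\mathbb{S}^2)=0$).

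The main obstacle will be establishing equicontinuity. I would apply Furstenberg's structure theorem: $(\mathbb{S}^2,\Gamma)$ sits over its maximal equicontinuous factor $\pi\colon \mathbb{S}^2 \to Y$ as a tower of isometric extensions, which are fiber bundles with compact homogeneous fibers, so $\dim Y \le 2$ and $Y$ is a Peano continuum as a continuous image of one. Since an equicontinuous minimal system is homogeneous under its Ellis group, Bron{\v s}te{\v \i}n's theorem forces $Y$ to be a closed manifold with virtually nilpotent fundamental group, hence $Y \in \{\mathrm{pt}, \mathbb{S}^1, \mathbb{T}^2, \mathbb{K}^2, \mathbb{RP}^2, \mathbb{S}^2\}$. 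I would then rule out each proper case: the point forces a trivial system by climbing back up the tower; $Y=\mathbb{S}^1$ realizes $\mathbb{S}^2$ as a closed $1$-manifold fiber bundle over $\mathbb{S}^1$, whose total space must be $\mathbb{T}^2$ or $\mathbb{K}^2$; $Y \in \{\mathbb{T}^2, \mathbb{K}^2\}$ forces $\mathbb{S}^2 \to Y$ to be a finite cover (dimensions agree), contradicting $\pi_1(\mathbb{S}^2)=0$; and $Y=\mathbb{RP}^2$ is excluded because equicontinuity lifts through the finite cover $\mathbb{S}^2 \to \mathbb{RP}^2$, whence the $\mathbb{S}^2$-action would already be equicontinuous. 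The delicate step I expect to require the most care is justifying both the dimension bound on $Y$ and the applicability of Bron{\v s}te{\v \i}n's theorem to the maximal equicontinuous factor, so that the classification of $Y$ is tight.
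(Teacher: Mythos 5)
Your overall architecture matches the paper's: first show the distal minimal action is equicontinuous by analyzing the maximal equicontinuous factor, then pass to the compact closure of the acting group and extract a nonabelian free subgroup. Your endgame, however, is a genuinely different (and for $\mathbb S^2$ arguably more economical) route: the paper never invokes Ker\'ekj\'art\'o or the Tits alternative, but instead uses Montgomery--Zippin to find a Lie group quotient $K/N$ acting transitively on a surface quotient $X/N$ with finite fundamental group, proves a lemma that such a connected compact Lie group must be semisimple, and applies Breuillard--Gelander to the dense finitely generated image to get a free subgroup; this treats $\mathbb S^2$ and $\mathbb{RP}^2$ uniformly, with no need for your covering-space reduction. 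Your reduction and your splitting trick for pulling the free subgroup back to $\Gamma$ are both correct, with one small imprecision: $\Gamma$ is a quotient of $\widetilde\Gamma$ by a finite normal subgroup, not a finite-index subgroup of it; the free subgroup descends because it meets the torsion kernel trivially.

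The genuine gaps are all in the equicontinuity step. First, $\dim Y\le 2$ is not automatic from $Y$ being a continuous (even Peano) image of $X$ --- continuous surjections can raise dimension; the needed input is Rees's theorem that $\dim Y+\dim(\pi^{-1}(y))=\dim X$ for homomorphisms of distal minimal systems. Second, for $Y=\mathbb S^1$ you assert that $\pi\colon X\to\mathbb S^1$ is a locally trivial bundle with closed $1$-manifold fibers; nothing you have established gives either the manifold structure on the fibers or local triviality. The paper's argument is much cheaper: $\pi$ is open (Auslander), open surjections of continua induce injections on first \v Cech cohomology, and ${\check H}^1(\mathbb S^1)=\mathbb Z$ cannot inject into ${\check H}^1(X)=0$. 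Third, when $\dim Y=2$ you assert $\pi$ is a finite cover (and, for $Y=\mathbb{RP}^2$, that ``equicontinuity lifts through the finite cover''); an open surjection between closed surfaces with $0$-dimensional fibers need not be a covering map, so this presupposes what must be proved. The correct tool is Bron{\v s}te{\v \i}n's theorem that an open extension of an equicontinuous minimal system with a $0$-dimensional fiber and finitely generated acting group is itself equicontinuous; this settles all the $2$-dimensional cases at once, without identifying $Y$. With those three substitutions your proof closes; as written, the $\mathbb S^1$ and finite-cover claims are unsupported.
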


Here we remark that the class of small groups is strictly larger than that of amenable groups, so this theorem is not implied by the main theorem
in \cite{Shi}; and it is easy to construct distal minimal actions on $\mathbb S^2$ and $\mathbb {RP}^2$ by $\mathbb Z*\mathbb Z$. In addition,
the theorem does not hold when $X$ is either the torus $\mathbb T^2$ or the Klein bottle $\mathbb K^2$, since they admit distal minimal actions by abelian groups (see the appendix).

\medskip
Now we summarize all the known results around the existence of distal minimal group actions on closed surfaces in the following tabular.
\medskip

\begin{center}
\begin{tabular}{|c|c|c|}\hline
\ \ \ \ \   Closed surfaces\ \ \ \ &\ \ \ \ \ \  Existence\ \ \ \ \ \ & Non-existence\\ \hline
$\mathbb{S}^{2}, \mathbb{R P}^{2}$ & $\mathbb{Z}*\mathbb{Z}$ & small groups, $SL(n,\mathbb{Z}) (n\geq 3)$\\ \hline
$\mathbb{T}^{2}, \mathbb{K}^{2} $& $\mathbb{Z}$ & $SL(n,\mathbb{Z}) (n\geq 3)$\\ \hline
Others &  & Any groups\\ \hline
\end{tabular}
\end{center}

\section{Preliminaries}

In this section, we just list the theorems that will be used in the proof of the main theorem without mentioning
the related notions and notations already appeared in \cite{Shi, Sh}.

\begin{thm}\cite[p.98]{Au} \label{homomorphism open}
Let $(X, G, \phi)$ and  $(Y, G, \psi)$ be distal minimal actions, and let $f:X\rightarrow Y$ be a homomorphism.
Then $f$ is open.
\end{thm}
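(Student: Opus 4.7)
My strategy is to use the Ellis enveloping semigroup $E(X):=\overline{\{\phi(g):g\in G\}}\subseteq X^{X}$ of the action. Since $(X,G,\phi)$ is distal, $E(X)$ is a compact right-topological group whose elements are (not necessarily continuous) bijections of $X$; the homomorphism $f$ induces a continuous surjective group homomorphism $f_{*}:E(X)\to E(Y)$. The first structural input I need is a \emph{fiber transitivity} statement: for every $y_{0}\in Y$, the subgroup
\[
H_{y_{0}}=\{\,p\in E(X):f_{*}(p)\,y_{0}=y_{0}\,\}
\]
acts transitively on $f^{-1}(y_{0})$. This follows from minimality of $(X,G)$: given $x_{0},x_{1}\in f^{-1}(y_{0})$ there is a net $g_{\alpha}\in G$ with $g_{\alpha}x_{0}\to x_{1}$, forcing $g_{\alpha}y_{0}\to y_{0}$, so any $E(X)$-limit $p$ of a subnet of $(g_{\alpha})$ lies in $H_{y_{0}}$ with $p\,x_{0}=x_{1}$.

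Next I would reduce openness of $f$ to the statement that the fiber map $\sigma:Y\to 2^{X}$, $\sigma(y)=f^{-1}(y)$, is continuous in the Hausdorff topology. Upper semicontinuity of $\sigma$ is automatic from compactness; the nontrivial direction is lower semicontinuity. Concretely, given $x_{0}\in X$, $y_{0}=f(x_{0})$, and a net $y_{\alpha}\to y_{0}$, I must produce (on a subnet) lifts $x_{\alpha}\in f^{-1}(y_{\alpha})$ with $x_{\alpha}\to x_{0}$. Picking any $x_{\alpha}\in f^{-1}(y_{\alpha})$ and extracting a convergent subnet, assume $x_{\alpha}\to x_{0}^{*}\in f^{-1}(y_{0})$. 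By the transitivity above, choose $p\in H_{y_{0}}$ with $p\,x_{0}^{*}=x_{0}$, and express $p$ as the pointwise limit of a net $(\phi(h_{\beta}))$ with $h_{\beta}\in G$, so that $\phi(h_{\beta})\,x_{0}^{*}\to x_{0}$ and $h_{\beta}\,y_{0}\to y_{0}$.

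The final step — and the main obstacle — is to combine the two nets into a diagonal one. Using the uniform continuity of each individual homeomorphism $\phi(h_{\beta})$, for each $\beta$ one can choose $\alpha(\beta)$ large enough that $\phi(h_{\beta})\,x_{\alpha(\beta)}$ remains within $1/\beta$ of $\phi(h_{\beta})\,x_{0}^{*}$, hence close to $x_{0}$, and likewise $h_{\beta}\,y_{\alpha(\beta)}$ is close to $y_{0}$; this produces a refined net of group elements steering chosen preimages into any prescribed neighborhood of $(x_{0},y_{0})$. The delicate point, where distality is crucial, is that $E(X)$ is only right-topological, so $p$ itself is not continuous and cannot simply be applied to the moving net $(x_{\alpha})$; distality must be used to ensure that no proximal pair is created under the limit, which would otherwise allow the diagonal lifts to drift. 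I plan to exploit this by observing that the orbit closure of $(x_{0}^{*},x_{0})\in X\times X$ under the diagonal $G$-action is minimal (by distality of the product flow) and disjoint from the diagonal, which pins down the limiting behavior of $\phi(h_{\beta})$ on the pair $(x_{\alpha},x_{0}^{*})$ and forces the resulting diagonal lifts to actually land in the fibers $f^{-1}(y_{\alpha})$ of the original net, completing the lower-semicontinuity of $\sigma$ and hence the openness of $f$.
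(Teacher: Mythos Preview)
The paper does not supply its own proof of this statement; it is quoted directly from Auslander's book, so there is nothing in the paper to compare your argument against. I will therefore just assess your proposal on its own terms.

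Your first two reductions are fine: fiber transitivity of $H_{y_0}$ on $f^{-1}(y_0)$ follows exactly as you say, and openness of $f$ is equivalent to lower semicontinuity of $y\mapsto f^{-1}(y)$.

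The gap is in the final diagonal step. The points $z_\beta:=\phi(h_\beta)\,x_{\alpha(\beta)}$ that you construct satisfy $z_\beta\to x_0$ and $f(z_\beta)=h_\beta\,y_{\alpha(\beta)}\to y_0$, but $h_\beta\,y_{\alpha(\beta)}$ is \emph{not} a term of the original net $(y_\alpha)$. Lower semicontinuity requires, for the \emph{given} net $y_\alpha\to y_0$, lifts $x_\alpha\in f^{-1}(y_\alpha)$ with $x_\alpha\to x_0$; producing lifts over some other net converging to $y_0$ proves nothing (it only yields $y_0\in\overline{f(U)}$ for each neighbourhood $U\ni x_0$, which is trivial since $y_0=f(x_0)\in f(U)$). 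Your proposed remedy---that minimality of the orbit closure of $(x_0^*,x_0)$ in $X\times X$ ``forces the resulting diagonal lifts to actually land in the fibers $f^{-1}(y_\alpha)$ of the original net''---does not hold: that orbit closure lies inside $R_f=(f\times f)^{-1}(\Delta_Y)$ and projects onto $X$, so it does give you points $(x_\alpha,x'_\alpha)\in R_f$ with $f(x'_\alpha)=y_\alpha$, but any subnet limit $(x_0^*,x'')$ need not have $x''=x_0$, and you are back to exactly the same obstruction (now for the first-coordinate projection $M\to X$ in place of $f$). The discontinuity of $p\in E(X)$ is genuinely the issue, and nothing in your sketch circumvents it.

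One correct route is to work in the hyperspace: let $\sigma(y)=f^{-1}(y)$ and take a minimal subset $M$ of the closure of the graph of $\sigma$ in $Y\times 2^X$; upper semicontinuity gives $F\subset f^{-1}(y)$ for every $(y,F)\in M$, minimality of $X$ forces $\bigcup\{F:(y,F)\in M\}=f^{-1}(y)$ for each $y$, and distality is then used to show each fibre of $M\to Y$ is a singleton, i.e.\ $\sigma$ is continuous. Alternatively, one can run an induction along the Furstenberg tower, proving openness directly for equicontinuous extensions (where the relative uniform structure makes the diagonal argument legitimate) and then composing.
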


\begin{thm}\cite[p.104]{Au} \label{maximal fator}
Suppose $X$ is not a single point. If $(X, G, \phi)$ is distal minimal, then it has a nontrivial equicontinuous factor.
\end{thm}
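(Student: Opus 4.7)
The statement says any non-singleton distal minimal flow $(X,G,\phi)$ factors onto a nontrivial equicontinuous flow. The plan is to construct the maximal equicontinuous factor directly and then verify it is nontrivial when $|X|>1$.

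First I would introduce the subalgebra $\mathcal{A}\subseteq C(X)$ of \emph{almost periodic} functions --- those $f\in C(X)$ whose $G$-orbit $\{f\circ\phi_g:g\in G\}$ is relatively compact in the uniform topology of $C(X)$. A routine check shows $\mathcal{A}$ is a closed $G$-invariant unital $C^{*}$-subalgebra, so by Gelfand duality $\mathcal{A}=C(Y)$ for a compact metric factor $\pi\colon X\to Y$, and the induced $G$-action on $Y$ is equicontinuous (its closure in $\mathrm{Homeo}(Y)$ is a compact topological group). By construction $Y$ is universal among equicontinuous factors of $X$, so the theorem reduces to showing that $\mathcal{A}$ is nontrivial whenever $|X|>1$.

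Next I would exploit distality through the Ellis semigroup $E=E(X)\subseteq X^{X}$, the pointwise closure of $\{\phi_g:g\in G\}$. By Ellis's theorem, distality forces $E$ to be a group of bijections, compact and right-topological, and minimality gives transitivity on $X$. There is a canonical continuous surjective homomorphism $E\to K$ onto a compact Hausdorff (genuine) topological group $K$, which will be the Ellis group of $Y$; matrix coefficients of continuous unitary representations of $K$, pulled back to $X$, span a dense subspace of $\mathcal{A}$ by Peter--Weyl. Fixing distinct $x_0,x_1\in X$, distality gives $\inf_{g\in G}d(\phi_g x_0,\phi_g x_1)>0$, so a continuous $h\in C(X)$ that separates $x_0$ and $x_1$ can be averaged against Haar measure on $K$ (after lifting to an $E$-equivariant object and pushing down) to produce $\tilde h\in\mathcal{A}$ which still separates $x_0$ from $x_1$. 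Hence $\mathcal{A}\ne\mathbb{C}$ and $Y$ is nontrivial.

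The main obstacle is the passage from the merely right-topological Ellis group $E$ to the genuine compact topological group $K$, and the verification that $K$ acts nontrivially on $X$ whenever $X$ is nontrivial. Haar averaging is not directly available on $E$, so one must either construct $K$ as the maximal Hausdorff topological group quotient of $E$ and show it does not collapse to a point, or produce an invariant mean on $\mathcal{A}$ via a Ryll--Nardzewski-type fixed point theorem. The crucial input is that distality rules out the ``collapsing'' (proximal pairs) that would force every almost periodic function to be constant; making this precise is the substantive content of the theorem and is exactly what is carried out in Chapter 7 of Auslander's book.
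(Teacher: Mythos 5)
The paper does not actually prove this statement: it is quoted directly from Auslander's book (p.~104), where it is the base case of the Furstenberg structure theorem for distal minimal flows. So the only question is whether your sketch stands on its own, and it does not. The first half is fine: the almost periodic functions $\mathcal{A}$ do form a closed $G$-invariant unital $C^*$-subalgebra, Gelfand duality does produce the maximal equicontinuous factor $Y$, and the theorem is indeed equivalent to $\mathcal{A}\neq\mathbb{C}\cdot 1$. But the step where distality is actually used is circular. You propose to average a separating function $h$ against Haar measure on a compact Hausdorff \emph{topological} group quotient $K$ of the Ellis group $E$; however, $K$ is essentially by construction the Ellis group of the maximal equicontinuous factor $Y$, so if $Y$ were a single point then $K$ would be trivial and the average $\tilde h$ would be constant. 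The averaging argument therefore produces a nonconstant almost periodic function only if one already knows the equicontinuous factor is nontrivial. Likewise, Ellis's theorem makes $E$ a compact \emph{right-topological} group, which by itself gives neither a Haar measure nor a nontrivial Hausdorff topological group quotient; proving that this quotient does not collapse is exactly the content of the theorem. You concede this in your final paragraph, so what you have written is an honest reduction of the statement to itself together with a pointer back to Auslander, not a proof.

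A smaller but real error: even granting a nontrivial $K$, the claim that $\tilde h$ ``still separates $x_0$ from $x_1$'' for an arbitrarily prescribed distal pair is too strong. The maximal equicontinuous factor of a distal minimal flow need not separate a given pair of points --- this is precisely why Furstenberg's structure theorem requires a transfinite tower of isometric extensions rather than a single equicontinuous factor. The theorem only asserts that \emph{some} pair is separated, i.e., that $Y$ is not a point. The substantive step, as carried out in Auslander's Chapter 7, is showing that the regionally proximal relation of a nontrivial distal minimal flow is not all of $X\times X$; a complete argument must engage with that (via Furstenberg's original argument or the enveloping-semigroup approach), and your sketch defers it entirely.
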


\begin{thm}\cite[Theorem 3.17.12]{Br}\label{equicon extension}
Let  $\pi: (X,G, \phi)\rightarrow (Y, G, \psi)$ be a homomorphism between minimal systems. Suppose that $\pi$ is open and $G$ is finitely generated. If $(Y, G,\psi)$ is equicontinuous and there is some  $y\in Y$ such that $f^{-1}(y)$ is of $0$-dimension. Then $(X, G, \phi)$ is also equicontinuous.
\end{thm}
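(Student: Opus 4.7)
The plan is to upgrade the single zero-dimensional fiber hypothesis to a uniform clopen trivialization of $\pi$ over all of $Y$, then combine this trivialization with the modulus of equicontinuity of the base action to build a modulus of equicontinuity on $X$. Minimality will transport local structure to the whole of $Y$, while finite generation of $G$ will keep the resulting combinatorics finite.

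First I would show that every fiber of $\pi$ is zero-dimensional. The set $Z = \{y \in Y : \dim \pi^{-1}(y) = 0\}$ is $G$-invariant by equivariance of $\pi$ and nonempty by hypothesis, hence dense by minimality of $(Y,G,\psi)$. It is also open: given $y_0 \in Z$ and $\epsilon > 0$, a clopen partition $A_1,\dots,A_k$ of $\pi^{-1}(y_0)$ with each $A_i$ of diameter less than $\epsilon$ can be separated by disjoint open sets $U_i \supset A_i$ in $X$. Since $\pi$ is closed, there is an open neighborhood $V$ of $y_0$ with $\pi^{-1}(V) \subset \bigsqcup_i U_i$; openness of $\pi$ then forces each $U_i \cap \pi^{-1}(V)$ to surject onto $V$. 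Hence every fiber over $V$ is partitioned into clopen pieces of small diameter, giving $Z = Y$; the same construction produces, for every prescribed $\epsilon$ and every $y_0$, a tube decomposition $\pi^{-1}(V) = \bigsqcup_i W_i$ with fiberwise diameter at most $\epsilon$.

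Next I would use minimality and compactness to cover $Y$ by finitely many translates $g_1 V, \dots, g_N V$, obtaining a finite clopen cover $\{g_j W_i\}$ of $X$ retaining the fiberwise $\epsilon$-control. Equicontinuity of $(Y,G,\psi)$ then supplies $\eta > 0$ such that any $\eta$-close pair in $Y$ is mapped, by every $g \in G$, into a common member of a Lebesgue refinement of $\{g_j V\}$. Fixing a finite generating set $s_1,\dots,s_m$ of $G$, continuity of each $s_l$ together with finiteness of the tube cover supplies a uniform modulus controlling how each generator permutes tubes. Combining these inputs I would extract $\delta > 0$ such that $d(x,x') < \delta$ implies $x,x'$ lie in a common tube and, for every word $g$ in the generators, $gx$ and $gx'$ lie in a common translated tube of small diameter, yielding $d(gx,gx') < \epsilon$ uniformly in $g$.

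The principal obstacle is the last step: transferring control from the generators to arbitrary group elements is not automatic, since fibers of $\pi$ are merely zero-dimensional (not necessarily finite) and compositions of continuous self-maps of a compactum need not form an equicontinuous family. One must choose the tubes so that their $G$-translates align coherently across the finite cover of $Y$, exploiting finite generation to keep the combinatorial bookkeeping finite and equicontinuity of $(Y,G)$ to make it uniform. Coordinating these two inputs so that both the fiber refinement and the orbit propagation cohere is the technical crux of the argument and the place where all four hypotheses of the theorem enter simultaneously.
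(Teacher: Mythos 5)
This statement is imported by the paper from Bron{\v s}te{\v \i}n's book and is not proved in the paper at all, so there is no internal proof to measure your attempt against; I can only judge the proposal on its own terms. Before doing that, observe that the statement as transcribed here must be missing a distality hypothesis (one which does hold in the paper's only application of it, inside Lemma 3.1, where both systems are distal): as literally stated the theorem is false. Take $X_0$ a Sturmian subshift, $Y=\mathbb{Z}/2\mathbb{Z}$ with the $\mathbb{Z}$-action $n\cdot i=i+n$, and $X=X_0\times Y$ with the product action. Then $X$ is minimal (Sturmian systems have no two-point factor, since their maximal equicontinuous factor is a connected circle rotation), the projection $\pi:X\to Y$ is an open homomorphism, $Y$ is equicontinuous, $G=\mathbb{Z}$ is finitely generated, and every fiber is a Cantor set, hence $0$-dimensional; yet $X$ has nontrivial proximal pairs inherited from $X_0$ and so is not equicontinuous. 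Consequently no proof that ignores distality can succeed, and yours never invokes it.

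The counterexample pinpoints the failure at exactly the step you defer as ``the technical crux.'' In $X_0\times Y$, a proximal pair $(x,i)$, $(x',i)$ lies in a common tube of arbitrarily small fiberwise diameter and has identical (hence $\eta$-close) images in $Y$ under every $g\in G$, and still $d(gx,gx')$ fails to stay small uniformly in $g$: equicontinuity of the base plus continuity of finitely many generators controls one application of a generator at a time, and the within-tube error accumulates along long words, no matter how the tubes and their translates are coordinated. So the deferred step is not bookkeeping; it is where the (omitted) distality hypothesis must enter. There is also a secondary gap earlier: your argument that $Z=\{y:\dim\pi^{-1}(y)=0\}$ is open in fact shows only that each set $Z_\epsilon=\{y:\pi^{-1}(y)\ \text{admits a relatively clopen partition of mesh}<\epsilon\}$ is open, because the neighborhood $V$ you build depends on $\epsilon$; Baire then gives that $Z$ is a dense $G_\delta$, not that $Z=Y$. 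In the distal setting one gets all fibers $0$-dimensional at once from Rees's theorem (Theorem 2.5 of this paper), and the natural route to the full statement is the relative structure theory of distal extensions rather than a hands-on modulus-of-continuity argument.
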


\begin{thm}\cite[p.52]{Au} \label{compact group}
Let $(X, G, \phi)$ be  equicontinuous. Then the closure $\overline{\phi(G)}$ in $C(X,X)$ with respect to
the uniform convergence topology is a compact topological group.
\end{thm}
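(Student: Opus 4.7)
The plan is to show first that the action $(X, \Gamma)$ is necessarily equicontinuous, and then to extract a nonabelian free subgroup from its compact Lie group closure via the Tits alternative. By Theorem \ref{maximal fator}, the system $(X, \Gamma)$ admits a nontrivial equicontinuous factor $\pi \colon X \to Y$, which is open by Theorem \ref{homomorphism open}. Since both systems are minimal and distal, all fibers of $\pi$ are pairwise homeomorphic, so in order to apply Theorem \ref{equicon extension} it suffices to prove that some fiber is zero-dimensional.

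The verification of zero-dimensional fibers is the heart of the argument. Two-dimensional fibers are excluded by connectedness of $X$: a $2$-dim fiber would equal $X$ itself, collapsing $Y$ to a point, contrary to non-triviality. To rule out one-dimensional fibers, I would exploit the topology of $X$: by Theorem \ref{compact group} the $\Gamma$-action on $Y$ extends to a transitive action of a compact topological group, so $Y$ is a compact homogeneous space; a $1$-dim $Y$ is then a circle or a $1$-dim solenoidal space. An open $\Gamma$-equivariant surjection $\pi \colon X \to Y$ with $1$-dim mutually homeomorphic fibers would decompose $X = \mathbb{S}^2$ or $\mathbb{RP}^2$ into a continuously parameterized family of $1$-dim continua. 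Using the positive Euler characteristic of $X$ ($\chi = 2$ or $1$) together with Poincar\'e--Hopf-type obstructions to $1$-dim decompositions of these surfaces (or by tracking the induced map on {\v C}ech cohomology, which for $X = \mathbb{S}^2$ must be trivial), this situation is ruled out. Hence all fibers of $\pi$ are zero-dimensional, and Theorem \ref{equicon extension} yields that $(X, \Gamma)$ is equicontinuous.

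With $(X, \Gamma)$ equicontinuous, Theorem \ref{compact group} produces a compact topological group $K := \overline{\phi(\Gamma)} \subseteq C(X, X)$, which acts transitively on $X$ by minimality. Since $X$ is a compact $2$-manifold, $K$ is a compact Lie group (Montgomery--Zippin); the classification of transitive compact Lie group actions on $\mathbb{S}^2$ and $\mathbb{RP}^2$ forces the identity component $K_0$ to have a simple non-abelian quotient (essentially $SO(3)$). As $K$ embeds into some $GL_n(\mathbb{C})$, the finitely generated subgroup $\Gamma$ becomes a finitely generated linear group, and the Tits alternative applies: either $\Gamma$ is virtually solvable, or $\Gamma$ contains a nonabelian free subgroup. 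If the former held, a finite-index solvable subgroup $\Gamma_0 \leq \Gamma$ would have closure $\overline{\Gamma_0}$ of finite index in $K$ and hence containing $K_0$; since the closure of a solvable subgroup of a Lie group is solvable, $K_0$ would itself be solvable, contradicting its simple non-abelian quotient. Thus $\Gamma$ contains a nonabelian free subgroup, completing the proof. The principal obstacle lies in the middle step --- ruling out $1$-dimensional fibers. The preliminaries of \S 2 reduce this to a topological obstruction on $X$, but converting that obstruction into a rigorous $\Gamma$-equivariant statement is the most delicate point of the argument.
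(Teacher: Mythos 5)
Your proposal does not address the statement you were asked to prove. The statement is Theorem \ref{compact group}, a classical fact quoted from Auslander: for an equicontinuous action $(X,G,\phi)$, the closure $\overline{\phi(G)}$ in $C(X,X)$ with respect to the uniform convergence topology is a compact topological group. What you have written is instead a proof sketch of the paper's Main Theorem (the existence of a nonabelian free subgroup of $\Gamma$), and your argument explicitly \emph{invokes} Theorem \ref{compact group} as one of its ingredients; read as a proof of that theorem it would therefore be circular. (For what it is worth, the paper itself offers no proof of this statement either --- it is cited from \cite[p.~52]{Au} as a preliminary.)

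A proof of the actual statement would run along entirely different lines from anything in your write-up. Equicontinuity of the family $\phi(G)$ together with compactness of $X$ gives, via the Arzel\`a--Ascoli theorem, that $\overline{\phi(G)}$ is a compact subset of $C(X,X)$. One must then verify the group structure: every uniform limit of elements of $\phi(G)$ is again a surjective homeomorphism (injectivity uses equicontinuity of the family of inverses $\{\phi(g)^{-1}\}$, which is again $\phi(G)$ since $G$ is a group); $\overline{\phi(G)}$ is closed under composition and inversion; and composition and inversion are continuous on $\overline{\phi(G)}$ in the uniform topology --- this last point is where equicontinuity is essential, since composition is not jointly continuous on all of $C(X,X)$. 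None of these steps appears in your proposal, so as it stands there is no proof of the stated theorem at all.
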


\begin{thm}\cite{Rees}\label{dimension}
Let $(X, G, \phi)$ and  $(Y, G, \psi)$ be distal minimal actions, and let $f:X\rightarrow Y$ be a homomorphism.
Then for every $y\in Y$, we have ${\rm dim}(Y)+{\rm dim}(f^{-1}(y))={\rm dim}(X)$.
\end{thm}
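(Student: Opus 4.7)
The plan is to prove the dimension formula by combining the Furstenberg--Ellis structure theorem for distal minimal systems with classical results from topological dimension theory. The argument naturally splits into three parts: establishing that $\dim(f^{-1}(y))$ is independent of $y$, proving the upper bound $\dim(X) \leq \dim(Y) + \dim(f^{-1}(y))$, and matching this with a lower bound.

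For the first part, I would use that, since $(X, G, \phi)$ is distal, its Ellis semigroup $E(X)$ is a group under composition; each of its elements is a bijection of $X$ which, by $G$-equivariance, permutes the fibers of $f$. Combined with the openness of $f$ (Theorem \ref{homomorphism open}), which forces the set-valued map $y \mapsto f^{-1}(y)$ to be continuous in the Hausdorff topology, a standard dimension-theoretic argument then shows that $\dim(f^{-1}(y))$ is the same for every $y$; call this common value $d$. The upper bound $\dim(X) \leq \dim(Y) + d$ follows immediately from the Hurewicz dimension inequality for continuous surjections between compact metric spaces.

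For the lower bound, I would invoke the Furstenberg--Ellis structure theorem, which represents $f : X \to Y$ as a (possibly transfinite) tower
\[
X = X_\alpha \to X_{\alpha - 1} \to \cdots \to X_0 = Y
\]
of equicontinuous extensions between distal minimal systems, with inverse limits taken at limit ordinals. At each successor stage $X_\beta \to X_{\beta - 1}$ one has an equicontinuous extension; its Ellis-group structure on the fibers (in the spirit of Theorem \ref{compact group}) realizes each fiber as a homogeneous space $K_\beta / H_\beta$ of a compact topological group, and the extension is locally trivial as a $K_\beta/H_\beta$-bundle. Consequently $\dim(X_\beta) = \dim(X_{\beta - 1}) + \dim(K_\beta / H_\beta)$. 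Summing along the tower, and handling the inverse-limit stages carefully, yields $\dim(X) \geq \dim(Y) + d$, which together with the upper bound gives the stated equality.

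The main obstacle will be executing the lower-bound step through the structure tower: one must verify that each equicontinuous successor stage is locally a trivial bundle with fiber a homogeneous space of a compact group (so that dimensions add exactly, not just one-sidedly), and that inverse limits at limit ordinals preserve exact additivity rather than merely an inequality. Theorems \ref{compact group} and \ref{equicon extension} are the preparatory ingredients that make this analysis tractable, the former supplying the compact group structure on the fibers and the latter controlling the interaction between equicontinuity and zero-dimensional fibers along the tower.
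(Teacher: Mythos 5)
The paper itself contains no proof of Theorem~\ref{dimension}: it is imported verbatim from Rees's thesis and used as a black box, so the only question is whether your outline would actually close, and I do not think it does as written. The first gap is the claim that $\dim(f^{-1}(y))$ is constant. Elements of the Ellis group of a distal system are bijections of $X$ permuting the fibers, but they are in general \emph{not} continuous, and mere bijections do not preserve topological dimension; likewise, continuity of $y\mapsto f^{-1}(y)$ in the Hausdorff metric (which does follow from openness of $f$) gives no control on dimension, since e.g.\ finite sets can converge in the Hausdorff metric to an arc. So the phrase ``a standard dimension-theoretic argument'' is carrying a load it cannot bear; constancy of the fiber dimension is part of what has to be proved, not an easy preliminary.

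The second and more serious gap is the lower bound, which you yourself flag as ``the main obstacle'' but do not resolve. Three independent things can go wrong. (a) An equicontinuous (isometric) extension of minimal distal systems has all fibers homeomorphic to a homogeneous space $K/H$ of a compact group, but it is not automatic that the extension is a \emph{locally trivial} $K/H$-bundle, particularly when $K$ is not a Lie group; this needs an argument (or a reduction to the Lie case via Theorem~\ref{small group}-type approximation). (b) Even for a globally trivial bundle, covering dimension is not additive on products of general compacta: by Pontryagin's classical examples there exist $2$-dimensional compacta whose product is $3$-dimensional, so $\dim(X_\beta)=\dim(X_{\beta-1})+\dim(K_\beta/H_\beta)$ requires that one of the factors be an ANR/manifold, which again must be established. (c) At limit ordinals, dimension can strictly drop in an inverse limit, so ``exact additivity'' need not survive the transfinite stages; and even granting additivity along the tower, you still must identify the sum of the stagewise fiber dimensions with $\dim(f^{-1}(y))$ for the original map, which is essentially the statement being proved. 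These are precisely the points where Rees's actual argument has to exploit the finite-dimensionality hypotheses to force the tower and its fibers to be manifolds; without that input your plan is a reasonable strategy sketch but not a proof.
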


\begin{thm}\cite[Theorem 3.17.10]{Br}\label{manifold}
Let $(X, G, \phi)$ be a distal minimal system with $X$ being a connected and locally connected compact metric space of
finite dimension, then $X$ is a manifold.
\end{thm}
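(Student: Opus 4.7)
The plan is to apply the Furstenberg structure theorem for distal minimal flows, which realizes $(X,G,\phi)$ as the inverse limit of a (possibly transfinite) tower
\[
\{\mathrm{pt}\}=X_{0}\leftarrow X_{1}\leftarrow X_{2}\leftarrow\cdots\leftarrow X_{\eta}=X,
\]
in which each successor arrow $\pi_{\alpha+1}\colon X_{\alpha+1}\to X_{\alpha}$ is an equicontinuous (isometric) extension and limit stages are inverse limits. The strategy is to prove by transfinite induction on $\alpha$ that every $X_{\alpha}$ is a topological manifold, and then to read off the conclusion at $\alpha=\eta$. Two facts will be used throughout: by Theorem~\ref{homomorphism open} every factor map between distal minimal systems is open, so each $X_{\alpha}$ inherits connectedness and local connectedness from $X$; and by repeated application of Theorem~\ref{dimension}, $\dim X_{\alpha}\leq\dim X<\infty$.

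\emph{Base case.} By Theorem~\ref{maximal fator}, the first nontrivial stage $X_{1}$ is the maximal equicontinuous factor, and by Theorem~\ref{compact group} it is a coset space $K/H$ of a compact topological group $K$. Being a connected, locally connected, finite-dimensional compact group quotient, the Montgomery--Zippin no-small-subgroups theorem forces $K$ (modulo its ineffective kernel on $X_{1}$) to act through a compact Lie group, so $X_{1}=K/H$ is a smooth manifold. \emph{Inductive step.} Assuming $X_{\alpha}$ is a manifold, the equicontinuous extension $\pi_{\alpha+1}$ has fibers on which a compact group of fiber-isometries acts transitively, exhibiting each fiber as a compact homogeneous space of dimension $\dim X_{\alpha+1}-\dim X_{\alpha}$ by Theorem~\ref{dimension}; the same Montgomery--Zippin argument then makes each fiber a manifold. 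Combining continuous local sections over coordinate charts of $X_{\alpha}$ with the compact structure group of fiber-isometries yields local product trivializations of $\pi_{\alpha+1}$, so $X_{\alpha+1}$ is a manifold. \emph{Termination.} A nontrivial successor step that keeps dimension constant would have $0$-dimensional connected fibers, i.e.\ point fibers, making the step trivial; thus every genuine step strictly increases dimension, so finiteness of $\dim X$ bounds the number of proper successor steps and limit ordinals contribute nothing new.

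The main obstacle is the inductive step: upgrading an abstract equicontinuous extension over a manifold base to an honest locally trivial fiber bundle with manifold fiber. Concretely, one must convert the transitive action of the compact group of fiber isometries into continuous local trivializations above coordinate charts of $X_{\alpha}$ and then invoke Montgomery--Zippin a second time, on the fiber, to recognize it as a manifold. Propagating local connectedness of $X$ down the tower via Theorem~\ref{homomorphism open} is indispensable here, since without it the fibers could well be Cantor-like (as happens for generic $0$-dimensional isometric extensions) and the manifold conclusion would fail. A secondary technical point is the handling of limit ordinals, which is absorbed by the dimension bound: the inverse limit over any limit ordinal stabilizes at the last stage at which a genuine dimension jump occurs.
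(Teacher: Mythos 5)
This statement is imported verbatim from Bron{\v s}te{\v \i}n's book \cite[Theorem 3.17.10]{Br}; the paper gives no proof of it, so there is nothing internal to compare against. Your strategy --- Furstenberg tower of isometric extensions plus Montgomery--Zippin --- is indeed the standard route and is essentially the one Bron{\v s}te{\v \i}n and Rees follow, but as written the sketch has two genuine gaps, both at the places you yourself flag as ``the main obstacle.''

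First, the inductive step. You assert that the transitive compact group of fiber isometries yields local product trivializations and that Montgomery--Zippin applies ``a second time, on the fiber.'' But the Montgomery--Zippin transitive-action theorem requires the space acted on to be \emph{connected and locally connected}, and the fibers of an isometric extension need not be either: openness of the factor maps (Theorem~\ref{homomorphism open}) propagates local connectedness to the \emph{bases} $X_{\alpha}$, not to the fibers $\pi_{\alpha+1}^{-1}(y)$, which a priori could be disconnected or Cantor-like. Ruling that out ultimately uses local connectedness of $X_{\alpha+1}$ \emph{after} one knows local triviality, and local triviality itself needs the structure theory of isometric extensions (realization as a quotient of a group extension, plus Gleason's local cross-section theorem for compact Lie group actions) --- none of which is supplied. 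This is where the actual content of Bron{\v s}te{\v \i}n's proof lives. Second, the termination argument is incorrect as stated: you claim a dimension-preserving successor step has ``$0$-dimensional connected fibers, i.e.\ point fibers,'' but fibers of nontrivial isometric extensions are routinely finite and nonconnected (e.g.\ $z\mapsto z^{2}$ on $\mathbb S^{1}$ as a factor map of rotations). Hence nontrivial steps of constant dimension exist, the tower need not be finite, and an infinite tower of such finite extensions produces a solenoid at the limit ordinal --- a non-manifold that is excluded only by the local connectedness of $X$, which your termination paragraph never invokes. Both gaps are repairable, but they are precisely the substance of the cited theorem rather than routine bookkeeping.
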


\begin{thm}\cite[Corollary 4.25]{Kn} \label{compact Lie group}
Let $G$ be a compact Lie group and let $\mathfrak{g}$ be the Lie algebra of $G$. Then
$\mathfrak{g}=Z(\mathfrak{g})\bigoplus [\mathfrak{g}, \mathfrak{g}]$, where $Z(\mathfrak{g})$ is the center of $\mathfrak{g}$ and
$[\mathfrak{g}, \mathfrak{g}]$ is semisimple.
\end{thm}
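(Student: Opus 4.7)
The plan is to exploit the fact that a compact Lie group admits a bi-invariant Riemannian metric, which translates at the Lie algebra level into an $\operatorname{Ad}$-invariant inner product on $\mathfrak{g}$. First I would construct this inner product by averaging any inner product on $\mathfrak{g}$ against the Haar measure of $G$; invariance under $\operatorname{Ad}(G)$ then differentiates to the statement that every operator $\operatorname{ad}_X$ with $X\in\mathfrak{g}$ is skew-symmetric with respect to the resulting inner product $\langle\cdot,\cdot\rangle$.

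Once this is in hand, the key observation is that if $\mathfrak{h}\subset\mathfrak{g}$ is an ideal, then its orthogonal complement $\mathfrak{h}^{\perp}$ is also an ideal: for $Y\in\mathfrak{h}^{\perp}$, $Z\in\mathfrak{g}$, and $X\in\mathfrak{h}$, skew-symmetry gives $\langle[Z,Y],X\rangle=-\langle Y,[Z,X]\rangle=0$. Applying this to the center $Z(\mathfrak{g})$, which is manifestly an ideal, produces a direct sum decomposition $\mathfrak{g}=Z(\mathfrak{g})\oplus\mathfrak{s}$ with $\mathfrak{s}:=Z(\mathfrak{g})^{\perp}$ an ideal. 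The inclusion $[\mathfrak{g},\mathfrak{g}]\subset\mathfrak{s}$ is immediate from invariance: $\langle[X,Y],Z\rangle=-\langle Y,[X,Z]\rangle=0$ whenever $Z\in Z(\mathfrak{g})$.

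The substantive remaining task is to show $\mathfrak{s}\subset[\mathfrak{g},\mathfrak{g}]$, for which I would prove that $\mathfrak{s}$ is semisimple and conclude $\mathfrak{s}=[\mathfrak{s},\mathfrak{s}]\subset[\mathfrak{g},\mathfrak{g}]$. Semisimplicity I would derive from the Killing form $B$ of $\mathfrak{g}$. Because each $\operatorname{ad}_X$ is skew-symmetric, $\operatorname{ad}_X^{2}$ is symmetric and negative semidefinite, so $B(X,X)=\operatorname{tr}(\operatorname{ad}_X^{2})\le 0$ with equality iff $\operatorname{ad}_X=0$, i.e.\ $X\in Z(\mathfrak{g})$. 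A standard Cauchy--Schwarz argument for semidefinite forms then shows that the radical of $B$ coincides with $Z(\mathfrak{g})$, so $B$ restricted to $\mathfrak{s}$ is nondegenerate, and Cartan's criterion yields semisimplicity of $\mathfrak{s}$.

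The main obstacle in this route is not any single step but rather the care needed to pin down the center of $\mathfrak{s}$ and identify the radical of $B$ cleanly; an alternative that sidesteps the Killing form is to observe directly that $Z(\mathfrak{s})\subset Z(\mathfrak{g})\cap\mathfrak{s}=0$ and then iterate the orthogonal-complement argument to decompose $\mathfrak{s}$ into minimal ideals, each of which is simple because any proper ideal of such a minimal piece would, together with its perpendicular complement, contradict minimality. Either path delivers the semisimplicity of $[\mathfrak{g},\mathfrak{g}]$ and completes the decomposition.
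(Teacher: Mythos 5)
Your argument is correct and is essentially the standard proof of this fact: the paper itself states the result without proof, citing \cite[Corollary 4.25]{Kn}, and the argument there is exactly your route via an $\operatorname{Ad}$-invariant inner product obtained by averaging, the observation that orthogonal complements of ideals are ideals, and the negative semidefiniteness of the Killing form to identify $Z(\mathfrak{g})^{\perp}$ with the semisimple ideal $[\mathfrak{g},\mathfrak{g}]$. One tiny simplification: once you know $B(X,X)=0$ forces $\operatorname{ad}_X=0$, the radical of $B$ is immediately $Z(\mathfrak{g})$ with no Cauchy--Schwarz needed.
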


\begin{thm}\cite[Corollary 1.103]{Kn} \label{abelian compact}
Let $G$ be a compact connected commutative Lie group of dimension $n$. Then $G$
is isomorphic to the $n$-torus $\mathbb T^n$.
\end{thm}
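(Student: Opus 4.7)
The plan is to realize $G$ as a quotient of its Lie algebra $\mathfrak g$ by a full-rank lattice, using the exponential map. The two inputs are that $\exp$ is a group homomorphism in the abelian case, and that the kernel is forced to have maximal rank by compactness.

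First I would exploit the hypothesis that $G$ is abelian. Then the Lie bracket on $\mathfrak g$ vanishes identically, and the Baker--Campbell--Hausdorff formula collapses to $\exp(X)\exp(Y)=\exp(X+Y)$ for all $X,Y\in\mathfrak g$. Consequently $\exp\colon(\mathfrak g,+)\to G$ is a continuous group homomorphism from the additive group of $\mathfrak g\cong\mathbb R^{n}$ to $G$. Since the differential of $\exp$ at $0$ is the identity, $\exp$ is a local diffeomorphism near $0$, so its image contains an open neighborhood of $1\in G$. An open subgroup of a topological group is also closed, so connectedness of $G$ forces $\exp$ to be surjective.

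Next I would analyze $\Lambda:=\ker(\exp)\subseteq\mathfrak g$. Local injectivity of $\exp$ at $0$ shows that $\Lambda$ is a discrete (hence closed) subgroup of $\mathfrak g\cong\mathbb R^{n}$. By the classical structure theorem for discrete subgroups of Euclidean space, $\Lambda$ is free abelian of some rank $r\le n$, and its $\mathbb R$-span is an $r$-dimensional subspace $V\subseteq\mathfrak g$; picking a linear complement $W$ to $V$ yields an isomorphism of Lie groups $\mathfrak g/\Lambda\cong(V/\Lambda)\times W\cong\mathbb T^{r}\times\mathbb R^{n-r}$. By the first isomorphism theorem $G\cong\mathfrak g/\Lambda$, and compactness of $G$ rules out the Euclidean factor, forcing $r=n$. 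Hence $\Lambda$ is a full-rank lattice in $\mathfrak g$ and $G\cong\mathbb R^{n}/\mathbb Z^{n}=\mathbb T^{n}$.

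The main obstacle is not deep; the work is assembling standard Lie-theoretic facts in the right order. One has to verify that $\exp$ is a homomorphism (this is precisely where the abelian hypothesis enters), that its kernel is discrete (from $\exp$ being a local diffeomorphism at $0$), and that discrete subgroups of $\mathbb R^{n}$ are free abelian of rank at most $n$ spanned by an $\mathbb R$-linearly independent set. Combining these ingredients with compactness of $G$ to eliminate the noncompact factor delivers the identification with $\mathbb T^{n}$.
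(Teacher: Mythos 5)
Your argument is correct and is precisely the standard proof of this classical fact (the paper does not prove it—it is quoted from Knapp, Corollary 1.103—and your route is essentially the one given there): $\exp$ is a surjective homomorphism with discrete kernel, and compactness forces that kernel to be a full-rank lattice. The only cosmetic point is the appeal to Baker--Campbell--Hausdorff, which is a priori a local statement; the identity $\exp(X)\exp(Y)=\exp(X+Y)$ for \emph{all} $X,Y$ is cleaner to get by noting that $t\mapsto\exp(tX)\exp(tY)$ is a one-parameter subgroup (using commutativity) with derivative $X+Y$ at $t=0$, but this changes nothing in substance.
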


\begin{thm}\cite[Theorem 3.50]{War} \label{center}
Let $G$ be a connected Lie group with Lie algebra $\mathfrak{g}$. Then the center of $G$ is a closed Lie subgroup of $G$ with
Lie algebra the center of $\mathfrak{g}$.
\end{thm}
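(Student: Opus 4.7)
The plan is to realize $Z(G)$ as the kernel of the adjoint representation $\mathrm{Ad}\colon G \to \mathrm{GL}(\mathfrak{g})$, which immediately endows it with the structure of a closed Lie subgroup, and then to compute its Lie algebra by differentiating this homomorphism at the identity.

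The first step, and the only place the connectedness hypothesis enters, is to establish the identity $Z(G) = \ker(\mathrm{Ad})$. One direction is immediate: for $g \in Z(G)$ the inner automorphism $c_g(h) = ghg^{-1}$ is the identity map on $G$, so its differential $\mathrm{Ad}(g)$ at $e$ is the identity on $\mathfrak{g}$. For the converse, suppose $\mathrm{Ad}(g) = \mathrm{id}_{\mathfrak{g}}$. Naturality of the exponential gives $g \exp(X) g^{-1} = \exp(\mathrm{Ad}(g)X) = \exp(X)$ for every $X \in \mathfrak{g}$, so $g$ centralizes $\exp(\mathfrak{g})$. Since $G$ is connected it is generated by any neighborhood of $e$, in particular by $\exp(U)$ for $U$ a small open neighborhood of $0$ in $\mathfrak{g}$, so $g$ centralizes all of $G$.

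The second step is to observe that $\mathrm{Ad}\colon G \to \mathrm{GL}(\mathfrak{g})$ is a smooth homomorphism of Lie groups, hence has closed (normal) kernel. By Cartan's closed-subgroup theorem, this kernel is automatically an embedded Lie subgroup of $G$, and via the identification above this gives $Z(G)$ the required structure of a closed Lie subgroup.

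The final step computes the Lie algebra. I would invoke the standard functorial fact that the Lie algebra of the kernel of a Lie group homomorphism is the kernel of its differential at the identity. For $\mathrm{Ad}$, the differential at $e$ is by definition the adjoint representation $\mathrm{ad}\colon \mathfrak{g} \to \mathrm{End}(\mathfrak{g})$ of the Lie algebra on itself, namely $\mathrm{ad}(X)(Y) = [X,Y]$. Consequently the Lie algebra of $Z(G)$ is $\ker(\mathrm{ad}) = \{X \in \mathfrak{g} : [X,Y] = 0 \text{ for all } Y \in \mathfrak{g}\} = Z(\mathfrak{g})$, as claimed. The main obstacle is really confined to the first step, where one must use connectedness of $G$ carefully; the remaining two steps reduce to invoking standard Lie-theoretic machinery (the closed-subgroup theorem and the identification $d\mathrm{Ad}_e = \mathrm{ad}$).
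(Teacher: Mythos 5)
Your proposal is correct and is the standard argument; the paper itself gives no proof of this statement (it is quoted from Warner as Theorem 3.50), and the argument there is essentially the one you give: identify $Z(G)=\ker(\mathrm{Ad})$ using connectedness, apply the closed-subgroup theorem, and compute the Lie algebra as $\ker(d\,\mathrm{Ad}_e)=\ker(\mathrm{ad})=Z(\mathfrak{g})$. Nothing further is needed.
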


\begin{thm}\cite[p.65]{MZ} \label{homogeneous space}
Let $X$ be a compact metric space and let $(X, G)$ be an action of group $G$ on $X$. Suppose $G$ is compact. Then for every
$x\in X$, $G/G_x$ is homeomorphic to $Gx$, where $G_x=\{g\in G: gx=x\}$.
\end{thm}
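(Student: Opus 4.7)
The plan is to construct the orbit map explicitly and then invoke the standard fact that a continuous bijection from a compact space to a Hausdorff space is a homeomorphism. Define the orbit map $\pi_x : G \to X$ by $\pi_x(g) = gx$. Because the group action $G \times X \to X$ is jointly continuous (implicit in the notation $(X,G)$ for a dynamical action), $\pi_x$ is continuous as the composition of $g \mapsto (g,x)$ with the action map. Its image is by definition the orbit $Gx$, and since $G$ is compact, $Gx$ is compact, hence closed in $X$.

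Next I would identify the fibers of $\pi_x$ with left cosets of the stabilizer. The stabilizer $G_x = \pi_x^{-1}(\{x\})$ is closed in $G$, and being also a subgroup, it is a closed (hence compact) subgroup. For $g,h \in G$, one has $gx = hx$ iff $h^{-1}g \in G_x$ iff $gG_x = hG_x$. Therefore $\pi_x$ factors through the canonical quotient map $q: G \to G/G_x$ (endowed with the quotient topology) as an injective map $\bar\pi_x : G/G_x \to Gx$ satisfying $\bar\pi_x \circ q = \pi_x$. The universal property of the quotient topology guarantees that $\bar\pi_x$ is continuous, and it is surjective onto $Gx$ by construction.

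Finally I would conclude that $\bar\pi_x$ is a homeomorphism. The space $G/G_x$ is compact, being the continuous image under $q$ of the compact space $G$. The orbit $Gx$, as a subspace of the metric space $X$, is Hausdorff. A continuous bijection from a compact space to a Hausdorff space is automatically closed: any closed subset of $G/G_x$ is compact, its image under $\bar\pi_x$ is compact in $Gx$, and hence closed in the Hausdorff space $Gx$. A continuous closed bijection is a homeomorphism, so $\bar\pi_x : G/G_x \to Gx$ is the desired homeomorphism.

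The argument is entirely routine point-set topology, and there is no serious obstacle. The only mildly delicate points are verifying the continuity of the orbit map, which rests on the joint continuity of the action, and applying the universal property of the quotient topology correctly. Notably, none of the dynamical hypotheses of the paper (minimality, distality, amenability, finite generation) play any role here; the result is a structural fact about compact group actions on Hausdorff spaces, recorded for use elsewhere in the paper.
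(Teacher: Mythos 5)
Your argument is correct and is the standard one: the paper gives no proof of its own here, simply citing Montgomery--Zippin, and the proof in that source is essentially the same factorization of the orbit map through $G/G_x$ followed by the compact-to-Hausdorff continuous-bijection argument. Nothing further is needed.
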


\begin{thm}\cite[p.99]{MZ} \label{small group}
 Let $G$ be a compact group and let $U$ be an open neighborhood of the identity $e$.
Then $U$ contains a closed normal subgroup $H$ of $G$ such that $G/H$ is isomorphic to a Lie group.
\end{thm}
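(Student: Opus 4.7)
The plan is to derive this Montgomery--Zippin approximation result as a consequence of the Peter--Weyl theorem, which guarantees that on any compact group the matrix coefficients of finite-dimensional continuous unitary representations separate points and are uniformly dense in $C(G)$. Granted Peter--Weyl, the strategy is to separate the closed set $\{e\}$ from the closed set $G \setminus U$ by finitely many such representations and take the intersection of their kernels.

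First I would apply Urysohn's lemma inside the compact Hausdorff space $G$ to produce a continuous $f : G \to [0,1]$ with $f(e) = 1$ and $f \equiv 0$ on $G \setminus U$. Peter--Weyl then supplies finitely many finite-dimensional continuous unitary representations $\pi_1, \dots, \pi_n$, with $\pi_i : G \to U(d_i)$, together with a linear combination $g$ of their matrix coefficients satisfying $\|g - f\|_\infty < 1/3$. Set $H = \bigcap_{i=1}^{n} \ker \pi_i$, which is automatically a closed normal subgroup of $G$. Every matrix coefficient of $\pi_i$ is constant on $\ker \pi_i$, so $g$ is constant on $H$; hence for each $h \in H$ one has $|f(h) - f(e)| \leq 2\|g-f\|_\infty < 2/3$, which forces $f(h) > 0$ and therefore $h \in U$. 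This gives $H \subset U$.

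For the Lie group assertion, the joint representation $\pi := \pi_1 \oplus \cdots \oplus \pi_n$ has kernel exactly $H$ by construction, so the induced continuous injection $\overline{\pi} : G/H \to \prod_i U(d_i)$ is a continuous bijection from the compact space $G/H$ onto a Hausdorff image, hence a topological embedding onto a closed subgroup of the Lie group $\prod_i U(d_i)$. Cartan's closed subgroup theorem then promotes $G/H$ to a Lie subgroup of $\prod_i U(d_i)$, equipping it with the desired Lie group structure, and the embedding becomes an isomorphism of topological groups onto its image.

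The main obstacle is the appeal to Peter--Weyl, which already packages two nontrivial ingredients: the existence of a bi-invariant Haar probability measure on the compact group $G$ (obtained via a fixed-point or averaging argument), and the spectral decomposition of the regular representation on $L^2(G)$ into finite-dimensional isotypic components, proved via compactness of convolution operators with continuous kernels. Once these analytic inputs are in place, the separation step that carves out $H \subset U$ is a routine uniform-approximation manipulation, and identifying the quotient with a Lie group follows immediately from Cartan's theorem.
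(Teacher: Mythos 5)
Your argument is correct: the Urysohn/Peter--Weyl separation producing $H=\bigcap_i\ker\pi_i\subset U$, and the identification of $G/H$ with a closed (compact) subgroup of $\prod_i U(d_i)$ via Cartan's closed subgroup theorem, is the standard proof of this approximation theorem. The paper itself offers no proof---it simply cites Montgomery--Zippin, whose treatment rests on the same Peter--Weyl machinery---so there is nothing to contrast your approach with.
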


\begin{thm}\cite[p.61]{MZ} \label{induce action} Let $X$ be a compact metric space and let $(X, G)$ be an action of group $G$ on $X$.
 Suppose $G$ is compact and $H$ is a closed normal subgroup of  $G$. Then  $G/H$ can
 act on $X/H$ by letting $gH\cdot H(x)=H(gx)$ for $gH\in G/H$ and $H(x)\in X/H$.
\end{thm}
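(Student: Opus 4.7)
The plan is to verify three things: that the formula $gH\cdot H(x):=H(gx)$ is unambiguous as a function of the cosets $gH$ and $H(x)$, that it satisfies the group action axioms, and finally that it is continuous as a map $G/H\times X/H\to X/H$.

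For well-definedness, suppose $gH=g'H$ and $H(x)=H(x')$. Then $g'=gh_1$ and $x'=h_2 x$ for some $h_1,h_2\in H$, so setting $h_3:=h_1h_2\in H$ gives $g'x'=gh_3x=(gh_3g^{-1})\,gx$. Because $H$ is normal in $G$, the element $gh_3g^{-1}$ lies in $H$, hence $g'x'\in H(gx)$, i.e., $H(g'x')=H(gx)$. Normality enters in an essential way here. The group action axioms then follow by direct computation: $eH\cdot H(x)=H(ex)=H(x)$, and
\[
(g_1H)(g_2H)\cdot H(x)=(g_1g_2)H\cdot H(x)=H(g_1g_2x)=g_1H\cdot H(g_2x)=g_1H\cdot\bigl(g_2H\cdot H(x)\bigr).
\]

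It remains to establish continuity, which is the only nontrivial step. Let $\pi:G\to G/H$ and $q:X\to X/H$ denote the two quotient maps, and let $\phi:G\times X\to X$ denote the given action. Since $H$ is a closed subgroup of a topological group, $\pi$ is open; and since $H$ is compact, for any open $U\subseteq X$ the saturation $q^{-1}(q(U))=\bigcup_{h\in H}hU$ is a union of open sets, so $q$ is open as well. The product $\pi\times q:G\times X\to G/H\times X/H$ is therefore an open continuous surjection, and any such map is a quotient map. By construction the proposed action $\bar\phi:G/H\times X/H\to X/H$ satisfies $\bar\phi\circ(\pi\times q)=q\circ\phi$, whose right-hand side is continuous, so by the universal property of quotient maps $\bar\phi$ is continuous. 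The main obstacle, if any, is that in general topology the product of two quotient maps need not be a quotient map; this is circumvented here precisely because compactness of $H$ makes both $\pi$ and $q$ open.
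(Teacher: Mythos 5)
Your argument is correct and complete; note that the paper itself offers no proof of this statement (it is quoted from Montgomery--Zippin), so there is nothing internal to compare against, and your three-step verification (well-definedness via normality, the action axioms, continuity via the quotient-map argument) is exactly the standard route. One small misattribution: the openness of $q:X\rightarrow X/H$ has nothing to do with compactness of $H$ --- the saturation $q^{-1}(q(U))=\bigcup_{h\in H}h^{-1}U$ is open simply because each $h\in H$ acts by a homeomorphism, so $q$ is open for \emph{any} group acting by homeomorphisms. Compactness of $H$ is what you would instead invoke to see that orbits $H(x)$ are closed and hence that $X/H$ is again a compact metrizable space (which the paper implicitly uses later when it treats $X/N$ as a manifold), so it is worth relocating that hypothesis to where it actually does work.
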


We use ${\check H}^1(X)$ to denote the first {\v C}ech cohomology group of $X$ with integer coefficients.

\begin{thm}\cite[Corollary 2.15]{Shi} \label{open inverse}
Let $f:X\rightarrow Y$ be an open map from a continuum $X$ onto a continuum $Y$. Then
$f^*:{\check H}^1(Y)\rightarrow {\check H}^1(X)$ is injective.
\end{thm}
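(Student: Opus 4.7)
The plan is to reformulate first {\v C}ech cohomology in terms of homotopy classes of maps to the circle and then explicitly push a lift on $X$ down to a lift on $Y$. I will use the Bruschlinsky--Huber identification: for any paracompact Hausdorff space $Z$ (in particular, any continuum), there is a natural group isomorphism ${\check H}^1(Z;\mathbb Z)\cong [Z,S^1]$, under which the zero class corresponds exactly to those $g:Z\to S^1$ that lift through the covering map $\exp:\mathbb R\to S^1$, $t\mapsto e^{2\pi i t}$. Since this identification is functorial, $f^*[g]=[g\circ f]$, and the statement reduces to showing that whenever $g\circ f$ admits a continuous lift $\tilde h:X\to\mathbb R$, the map $g$ itself admits a continuous lift $h:Y\to\mathbb R$.

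Given such a $\tilde h$, the values of $\tilde h$ on any fiber $f^{-1}(y)$ all differ by integers, since $\exp\circ\tilde h=g\circ f$ is constant on the fiber. Fibers are compact and each coset $t+\mathbb Z$ is discrete in $\mathbb R$, so $\tilde h(f^{-1}(y))$ is finite, and I define
\[
h(y):=\min\{\tilde h(x):x\in f^{-1}(y)\}.
\]
By construction $\exp(h(y))=g(y)$ for every $y\in Y$, so once $h$ is shown to be continuous, $g$ lifts to $h$, hence $[g]=0$ in ${\check H}^1(Y)$ and $f^*$ is injective.

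The continuity of $h$ is where the remaining hypotheses enter, and I verify the two semi-continuities separately. For upper semi-continuity at $y_0$, choose $x_0\in f^{-1}(y_0)$ attaining the minimum; given $\varepsilon>0$ pick an open neighborhood $U$ of $x_0$ with $\tilde h(U)\subset(h(y_0)-\varepsilon,h(y_0)+\varepsilon)$. Openness of $f$ makes $f(U)$ a neighborhood of $y_0$, so for $y$ in this neighborhood some $x\in U\cap f^{-1}(y)$ exists and $h(y)\le\tilde h(x)<h(y_0)+\varepsilon$. For lower semi-continuity, given $y_n\to y_0$ I pick $x_n\in f^{-1}(y_n)$ realizing $h(y_n)$, extract a subsequential limit $x_{n_k}\to x^*\in f^{-1}(y_0)$ by compactness of $X$, and deduce $h(y_0)\le\tilde h(x^*)=\lim h(y_{n_k})$.

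The main obstacle is the upper semi-continuity step, which is precisely where openness of $f$ is indispensable: without it, nearby fibers could fail to accumulate on $x_0$ and the minimum over fibers could jump downward. Connectedness of the spaces plays no explicit role in the argument; what is genuinely used is compactness (for finite fiber images and subsequential limits of minimizers) together with openness of $f$ (for the local surjection property controlling $h$ from above).
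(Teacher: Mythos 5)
Your proof is correct. The paper does not prove this statement itself (it is imported verbatim from \cite[Corollary 2.15]{Shi}), so there is no internal argument to compare against; your route --- the Bruschlinsky identification ${\check H}^1(\cdot)\cong[\cdot,S^1]$ followed by the fiberwise-minimum lift $h(y)=\min \tilde h(f^{-1}(y))$, with openness of $f$ giving upper semicontinuity and compactness of $X$ giving lower semicontinuity --- is the standard proof of exactly this fact and is complete as written.
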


\begin{thm}\cite [Theorem 1.3]{BG} \label{free subgroups}
Let $G$ be a connected non-solvable real Lie group of dimension $d$. Then any finitely generated dense subgroup of $G$
contains a dense free subgroup of rank $2d$.
\end{thm}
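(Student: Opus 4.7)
The plan is to build the $2d$ free generators inside $\Gamma$ by an inductive construction that enforces freeness via a ping-pong argument on a flag variety and ensures density via an escape-from-subvarieties argument. The central observation is that for every nontrivial reduced word $w$ in $2d$ letters, the locus $V_w = \{\mathbf{g} \in G^{2d} : w(\mathbf{g}) = e\}$ is a proper real-analytic subvariety of $G^{2d}$, hence has empty interior. Since $\Gamma$ is dense, $\Gamma^{2d}$ is not contained in any countable union of such subvarieties, and this will allow one to choose free-generating tuples inside $\Gamma$. The density of the generated subgroup in $G$ is similarly encoded by $\mathbf{g}$ avoiding a further countable union of proper subvarieties (parameterizing tuples contained in proper closed subgroups of $G$).

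The first step is to reduce to the semisimple case via the Levi decomposition $G = L \ltimes R$, where $R$ is the solvable radical and $L$ a Levi complement; non-solvability of $G$ forces $L$ to be a nontrivial semisimple connected Lie group. The projection $\pi : G \to G/R \cong L$ sends $\Gamma$ to a dense finitely generated subgroup of $L$. Once the semisimple case is handled, producing a dense free subgroup of rank $2\dim L$ in $\pi(\Gamma)$, one lifts each generator to an element of $\Gamma$ and adjoins further $\Gamma$-elements that topologically generate the solvable kernel; the block-triangular structure of $\mathrm{Ad}$ with respect to $R$ ensures that any nontrivial word in the full $2d$-tuple that involves the ``semisimple'' generators projects nontrivially to $L$, while the extra solvable generators can be chosen generic enough to preserve freeness via the escape argument.

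For the semisimple case, I would work on the flag variety $G/P$ for a minimal parabolic $P$. A regular $\mathbb{R}$-split element $\gamma_0 \in G$ acts with north-south dynamics (a unique attracting fixed point $p^+$ and a unique repelling fixed point $p^-$ with open basins of attraction); by density of $\Gamma$, some element of $\Gamma$ enjoys the same dynamical picture. Conjugating by further elements of $\Gamma$ moves the attracting and repelling basins into pairwise disjoint configurations across $2d$ copies, so that the classical ping-pong lemma yields freeness of the resulting group. At stage $j$ of the induction one selects $g_j \in \Gamma$ lying inside a prescribed nonempty open set (ensuring both the required hyperbolic dynamics and progress toward topological generation) and outside the finitely many $V_w$ currently relevant to preserving freeness of $\langle g_1,\dots,g_j\rangle$; the density of $\Gamma$ together with the empty-interior property of each $V_w$ make the inductive step feasible.

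The main obstacle is the simultaneous coordination of freeness and density: the new generator must lie in the countable set $\Gamma$, fall inside a prescribed open set dictated by the ping-pong dynamics, and avoid a growing collection of proper real-analytic subvarieties. Resolving this requires careful bookkeeping of which word-varieties are active at each stage (only finitely many are needed at each step to maintain freeness of the finite generating set built so far), together with a dimension count that pins down why the rank $2d$ is the natural bound: one needs at least $d$ parameters to topologically generate a $d$-dimensional connected Lie group, and the doubling provides precisely the slack needed to realize the ping-pong constraints and freeness conditions simultaneously. The technical heart of the proof is thus an openness/transversality argument showing that the set of $2d$-tuples satisfying both the hyperbolic-dynamics hypothesis of ping-pong and the topological-generation requirement has nonempty interior in $G^{2d}$, at which point the escape-from-subvarieties principle delivers the required tuple inside $\Gamma^{2d}$.
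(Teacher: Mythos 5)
This statement is not proved in the paper at all: it is quoted from Breuillard--Gelander \cite[Theorem 1.3]{BG} and used as a black box, so the comparison must be with the argument in that reference. Your outline does share the skeleton of their proof (passage to a semisimple quotient, ping-pong on a flag variety with proximal elements supplied by density, and the observation that each word variety $V_w$ is a proper analytic subvariety of $G^{2d}$ with empty interior), but it has a genuine gap at its load-bearing step. The ``escape from subvarieties'' principle --- avoiding a countable union of closed sets with empty interior --- is a Baire category argument valid for tuples in $G^{2d}$, not for tuples in $\Gamma^{2d}$: the set $\Gamma^{2d}$ is countable, and a countable set can perfectly well be contained in a countable union of proper subvarieties (indeed in a countable union of points). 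Your claimed repair, that ``only finitely many $V_w$ are needed at each step to maintain freeness of the finite generating set built so far,'' is false: freeness of $\langle g_1,\dots,g_j\rangle$ already requires avoiding the infinitely many varieties $V_w$ indexed by all nontrivial reduced words in $j$ letters. This is exactly why Breuillard and Gelander cannot, and do not, rely on genericity to place the free tuple inside $\Gamma$; instead the ping-pong configuration itself certifies all infinitely many non-relations simultaneously for explicitly constructed elements of $\Gamma$, and the real difficulty of their proof is making that configuration compatible with \emph{dense} generation (their perturbation-stable generation lemma, which is where the rank $2d=2\dim G$ actually comes from).

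A second gap is your treatment of the solvable radical $R$. A dense subgroup $\Gamma\le G$ need not meet $R$ in a dense (or even nontrivial) subgroup of $R$, so you cannot ``adjoin $\Gamma$-elements that topologically generate the solvable kernel.'' Moreover, freeness of a mixed tuple does not follow from your block-triangular remark: a word such as the commutator of a ``semisimple'' generator with a ``solvable'' generator dies in $G/R$, so its nontriviality cannot be detected there and would have to be guaranteed by the (unavailable) escape argument. In the actual proof, freeness of the whole $2d$-tuple is certified by ping-pong in a simple quotient applied to the images of all $2d$ elements at once, while density is certified in $G$ itself by stability of topological generation under small perturbations; neither half is delegated to a genericity argument inside the countable group $\Gamma$.
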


It is well known that a compact connected Hausdorff space is locally connected metrizable if and only if it is a continuous image of the closed interval $[0,1]$ (\cite[Theorem 8.18]{Nad}). Thus the following result is direct.
\begin{thm}\label{local connect}
Let $X$ be a compact metric space which is connected and locally connected and $Y$ be a Hausdorff space. If
there is a continuous surjection $f:X\rightarrow Y$, then $Y$ is locally connected and metrizable.
\end{thm}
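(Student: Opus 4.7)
The plan is to invoke the Hahn--Mazurkiewicz theorem (the compact Hausdorff version cited just above the statement as \cite[Theorem 8.18]{Nad}) twice, once on $X$ and once on $Y$, with the continuous surjection $f$ composed in between. The only preliminary work is to verify that $Y$ satisfies the hypotheses under which the cited equivalence applies.

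First I would observe that $Y$ is automatically a compact connected Hausdorff space: compactness follows because $Y = f(X)$ is the continuous image of the compact space $X$; connectedness follows because $X$ is connected and continuous images of connected spaces are connected; and Hausdorffness is given. Next, since $X$ is a compact, connected, locally connected metric space (i.e.\ a Peano continuum), the forward direction of the cited equivalence produces a continuous surjection $g:[0,1]\to X$. Composing, $f\circ g:[0,1]\to Y$ is a continuous surjection from $[0,1]$ onto the compact connected Hausdorff space $Y$. Applying the other direction of the cited equivalence to $Y$, we conclude that $Y$ is locally connected and metrizable, which is exactly what is claimed.

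There is essentially no hard step here; the only thing one has to be careful about is that the metrizability of $Y$ is not guaranteed in advance, so one really does need a theorem (like Hahn--Mazurkiewicz in its compact Hausdorff formulation) that simultaneously produces local connectedness and metrizability from the mere existence of a continuous surjection from $[0,1]$. Since the excerpt already cites exactly that theorem, the proof is a two-line composition argument and no further machinery is needed.
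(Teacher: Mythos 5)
Your proposal is correct and is exactly the argument the paper intends: the sentence preceding the theorem cites the compact Hausdorff Hahn--Mazurkiewicz equivalence and declares the result ``direct,'' meaning precisely your two-step composition through $[0,1]$ after noting that $Y$ is compact, connected, and Hausdorff. No difference in approach.
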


\section{Proof of the main theorem}

\begin{lem}\label{distal imply equicon}
Let $X$ be the $2$-sphere $\mathbb S^2$ or the real projective plane $\mathbb {RP}^2$. Let $\Gamma$ be a finitely generated group and $\phi:\Gamma\rightarrow {\rm Homeo}(X)$ be a distal minimal action on $X$. Then  $(X, \Gamma, \phi)$ is equicontinuous.
\end{lem}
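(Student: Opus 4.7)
The plan is to produce an equicontinuous factor of $(X,\Gamma,\phi)$ that is \emph{large enough} so that Bron{\v s}te{\v\i}n's criterion (Theorem~\ref{equicon extension}) forces $(X,\Gamma,\phi)$ itself to be equicontinuous. First, by Theorem~\ref{maximal fator} the system admits a nontrivial equicontinuous factor, and I fix the factor map $\pi:(X,\Gamma,\phi)\to (Y,\Gamma,\psi)$. Theorem~\ref{homomorphism open} immediately tells us $\pi$ is open, so Theorem~\ref{equicon extension} applies provided some fiber $\pi^{-1}(y)$ is $0$-dimensional.

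To arrange this, I first identify $Y$ up to homeomorphism. Since $X$ is a compact connected locally connected metric space and $\pi$ is a continuous surjection, Theorem~\ref{local connect} gives that $Y$ is connected, locally connected and metrizable. Theorem~\ref{dimension} bounds $\dim Y\le \dim X=2$, and then Theorem~\ref{manifold}, applied to the distal minimal system $(Y,\Gamma,\psi)$, forces $Y$ to be a (connected closed) manifold of dimension $\le 2$. Since $Y$ is nontrivial, $\dim Y\in\{1,2\}$.

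If $\dim Y=2$, then Theorem~\ref{dimension} yields $\dim \pi^{-1}(y)=0$ for every $y\in Y$, and Theorem~\ref{equicon extension} concludes the argument. The main obstacle is therefore to rule out the case $\dim Y=1$; in that case $Y$ is a closed connected $1$-manifold, hence $Y\cong\mathbb S^1$, and I will use cohomology to derive a contradiction. Since $\pi$ is an open surjection between continua, Theorem~\ref{open inverse} provides an injection $\pi^*:\check H^1(Y)\hookrightarrow \check H^1(X)$. But $\check H^1(\mathbb S^1)\cong\mathbb Z$, while for $X=\mathbb S^2$ one has $\check H^1(X)=0$ and for $X=\mathbb{RP}^2$ the universal coefficient theorem gives $\check H^1(X)=\mathrm{Hom}(\mathbb Z/2,\mathbb Z)=0$; an injective map $\mathbb Z\hookrightarrow 0$ is impossible. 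This rules out $\dim Y=1$ and completes the proof.

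Thus the only genuine technical input beyond the preliminaries is the topological observation that both $\mathbb S^2$ and $\mathbb{RP}^2$ have vanishing first {\v C}ech (equivalently singular) cohomology with integer coefficients; everything else is an assembly of the quoted results in the correct order.
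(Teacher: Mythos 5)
Your proposal is correct and follows essentially the same route as the paper's proof: pass to a nontrivial equicontinuous factor, identify it as a manifold of dimension $1$ or $2$ via Theorems~\ref{dimension}, \ref{manifold}, and \ref{local connect}, handle the $2$-dimensional case with Theorems~\ref{homomorphism open} and \ref{equicon extension}, and exclude the circle case with Theorem~\ref{open inverse} and the vanishing of $\check H^1(\mathbb S^2)$ and $\check H^1(\mathbb{RP}^2)$. The only cosmetic difference is that the paper frames the whole argument as a proof by contradiction using the maximal equicontinuous factor, whereas you argue directly; the substance is identical.
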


\begin{proof}
Assume to the contrary that $\phi$ is not equicontinuous. From Theorem \ref{maximal fator}, we let $(Y, \Gamma, \psi)$ be the
maximal equicontinuous factor of $(X, \Gamma, \phi)$ with a factor map $\pi$. Then $Y$ is a compact manifold of
dimension $\leq 2$ by Theorem \ref{dimension}, Theorem \ref{manifold}, and Theorem \ref{local connect}. If ${\rm dim}(Y)=2$, then it follows from
Theorem \ref{dimension} that for every $x\in X$, ${\rm dim}(\pi^{-1}(x))=0$. This together with Theorem \ref{equicon extension} and Theorem \ref{homomorphism open}
implies that  $(X, \Gamma, \phi)$ is equicontinuous, which contradicts the assumption. So, we
may assume that ${\rm dim}(Y)=1$; this means that $Y$ is the circle $\mathbb S^1$. Since $\pi$ is open and ${\check H}^1(\mathbb S^1)$
is the integer group, from Theorem \ref{homomorphism open} and Theorem \ref{open inverse}, we have that ${\check H}^1(X)$ is infinite, which is a contradiction.  \end{proof}

\begin{lem}\label{semisimple}
Let $G$ be a connected compact Lie group acting faithfully and transitively on a closed surface $X$  with finite fundamental group. Then $G$ is semisimple.
\end{lem}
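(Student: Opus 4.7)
The plan is to argue by contradiction: assuming $G$ is not semisimple, I will extract a free circle action on $X$ whose orbit space is $\mathbb{S}^1$, forcing $\pi_1(X)$ to be infinite. If $G$ is not semisimple, Theorem \ref{compact Lie group} gives $Z(\mathfrak{g}) \neq 0$, and Theorem \ref{center} together with Theorem \ref{abelian compact} identifies the identity component $Z(G)^0$ of the center as a nontrivial torus; I pick a circle subgroup $H \cong \mathbb{S}^1 \subseteq Z(G)^0$.

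The geometric heart of the argument exploits centrality. For every $g \in G$ and $x \in X$, one has $H_{gx} = gH_xg^{-1} = H_x$, so by transitivity all points share a common $H$-stabilizer $H_0$. Since closed subgroups of $\mathbb{S}^1$ are either the full circle or finite, and the former is ruled out by faithfulness of the action, $H_0$ must be finite. Hence $\bar H := H/H_0 \cong \mathbb{S}^1$ acts freely on $X$. Writing $X = G/K$ with $K = G_x$, the quotient $X/\bar H$ identifies with $G/HK$ (note $HK$ is a closed subgroup because $H$ is compact and central); since $H \cap K = H_0$ is $0$-dimensional, $\dim HK = \dim K + 1$, so $\dim X/\bar H = 1$. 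As a compact connected homogeneous space of the Lie group $G$ of dimension one, $X/\bar H$ must be $\mathbb{S}^1$.

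The conclusion is then immediate: the projection $X \to X/\bar H$ is a fiber bundle $\mathbb{S}^1 \hookrightarrow X \to \mathbb{S}^1$, whose homotopy long exact sequence yields a surjection $\pi_1(X) \twoheadrightarrow \pi_1(\mathbb{S}^1) = \mathbb{Z}$, contradicting the finiteness of $\pi_1(X)$. The step demanding the most care is the identification of $X/\bar H$ as a 1-manifold and of $X \to X/\bar H$ as a bundle; I handle this by trading the purely topological action for the algebraic description $X = G/K$, which makes both structures immediate from standard Lie-theoretic facts. Everything else reduces to bookkeeping with the preliminary theorems cited above.
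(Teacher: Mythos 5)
Your proof is correct, and while it follows the same basic strategy as the paper's --- exploit the nontrivial central torus guaranteed by Theorems \ref{compact Lie group}, \ref{abelian compact}, and \ref{center}, and derive a contradiction either with faithfulness or with the finiteness of $\pi_1(X)$ --- the execution differs in two genuine ways. First, where the paper examines the orbits of the full central torus and splits into cases (``every orbit is a circle'' versus ``some orbit is a point''), you observe up front that centrality forces all stabilizers $H_x$ of your circle subgroup to coincide, which packages the dichotomy more cleanly: the common stabilizer is either all of $H$ (killing faithfulness) or finite (yielding a genuinely free circle action). The paper's fixed-point case is really the same centrality computation ($Kgx_0=gKx_0$), just phrased at the level of orbits rather than stabilizers. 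Second, and more substantively, in the circle-orbit case the paper identifies $X/K$ with $\mathbb S^1$ and invokes Theorem \ref{open inverse} to inject $\check H^1(\mathbb S^1)=\mathbb Z$ into $\check H^1(X)$, whereas you upgrade the quotient map to a principal $\mathbb S^1$-bundle $G/K\to G/HK$ over $\mathbb S^1$ and read off a surjection $\pi_1(X)\twoheadrightarrow\mathbb Z$ from the homotopy exact sequence. Your route buys a more self-contained and structurally transparent argument (no appeal to the \v Cech-cohomology injectivity result or to the somewhat informal ``similar to the arguments in Lemma \ref{distal imply equicon}''), at the cost of needing the Lie-theoretic identifications $X\cong G/K$ and $X/\bar H\cong G/HK$, which you justify adequately; the paper's route stays closer to the purely topological-dynamical toolkit it has already assembled.
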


\begin{proof}
Assume to the contrary that $G$ is not semisimple. Then by Theorem \ref{compact Lie group}, Theorem \ref{abelian compact}, and Theorem \ref{center},
 the connected component $Z(G)_0$ of the center  of $G$ is isomorphic
to some torus $\mathbb T^n$ with $n>0$. Set $K=Z(G)_0$. For $x\in X$, let ${\rm Stab}(x):=\{k\in K: kx=x\}$ be the stabilizer of $x$ in $K$.
Then from Theorem \ref{homogeneous space}, $Kx$ is homeomorphic to $K/{\rm Stab}(x)$ which is also a torus. Thus $Kx$ is either a point or a circle. If for every
$x\in X$, $Kx$ is a circle, then similar to the arguments in Lemma \ref{distal imply equicon}, $X/K$ is a circle and the {\v C}ech cohomology group ${\check H}^1(X)$ with integer coefficients is infinite. This is a contradiction.
So, there is some $x_0\in X$ with $Kx_0=x_0$. Since $K$ is in the center of $G$, we have $Kgx_0=gKx_0=gx_0$ for every $g\in G$.
Noting that $Gx_0=X$, the action of $K$ on $X$ is trivial, which contradicts the faithfulness of the action. So, $G$ is semisimple.
\end{proof}

\begin{proof}[Proof of the main theorem] Let $\phi:\Gamma\rightarrow {\rm Homeo}(X)$ be the
distal  minimal action. From Lemma \ref{distal imply equicon}, we see that the $(X, \Gamma, \phi)$ is equicontinuous.
Let $K$ be the closure of $\phi(\Gamma)$ with respect to the uniform topology on ${\rm Homeo}(X)$. It follows from Theorem \ref{compact group}
that $K$ is a compact metric group acting transitively on $X$. By Theorem \ref{small group}, we can take a closed normal subgroup $N$ of $K$
such that $K/N$ is a Lie group and $X/N$ is not a single point. Then  it canonically induces an action of $K$ on $X/N$ and the natural quotient map $X\rightarrow X/N$ is a equicontinuous extension. Thus it follows from Theorem \ref{dimension}, Theorem \ref{manifold}, and Theorem \ref{local connect} that $X/N$ is a manifold of dimension $\leq 2$.
Similar to the arguments in Lemma \ref{distal imply equicon}, we have $\dim(X/N)=2$.

Set $p:\mathbb S^2\rightarrow X$ be a covering and $q: X\rightarrow X/N$ be the quotient map. Let
$\pi:Y\rightarrow X/N$ be the universal covering and $\widetilde{qp}:\mathbb S^2\rightarrow Y$ be the
lifting of $qp$. Since $\pi$ is open and $p$ and $q$ are local homeomorphisms, we see that
$\widetilde{qp}$ is open. Thus $\widetilde{qp}(\mathbb S^2)$ is open and closed in $Y$. Thus
$Y=\widetilde{qp}(\mathbb S^2)$ by the connectedness. So $Y$ is compact (homeomorphic to $\mathbb S^2$). Thus
$\pi$ is a finite cover and then the fundamental group of $X/N$ is finite.

Now consider the natural action of $K/N$ on $X/N$ (see Theorem \ref{induce action}). Since the connected component $(K/N)_0$ has finite
index in $K/N$, the $(K/N)_0$ action on $X/N$ is still transitive and $\phi(\Gamma)N\cap (K/N)_0$ is dense in $(K/N)_0$.
Applying Lemma \ref{semisimple}, we see that a quotient group of $(K/N)_0$ is semisimple. This together with
Theorem \ref{free subgroups} implies the existence of free nonabelian subgroups in $\Gamma$.
\end{proof}

\section{appendix}
In this appendix, we show that there is a minimal distal  homeomorphism on the Klein bottle.

Let the torus $\mathbb{T}^{2}$ be $\mathbb{R}^{2}/\mathbb{Z}^{2}$. Then there is a $\mathbb{Z}_{2}$ action on $\mathbb{T}^{2}$ by
\[ h(x,y)=(x+\frac{1}{2}, 1-y) \mod \mathbb{Z}^{2},\]
where $h$ is the nonidentity element in $\mathbb{Z}_{2}$. It is easy to see that this action is free and properly discontinuous and the quotient space $\mathbb{T}^{2}/\mathbb{Z}_2$ is the Klein bottle $\mathbb{K}^{2}$.

\medskip
Now for a homeomorphism $T$ of $\mathbb{T}^{2}$,  if it commutes with $h$, i.e., $Th=hT$, then it induces a homeomorphism $\widetilde{T}$ of $\mathbb{K}^{2}$.

\medskip
 Let $\alpha$ be an irrational number and $\phi: \mathbb{T}\rightarrow\mathbb{T}$ be a continuous mapping. Further, define a  homeomorphism $T: \mathbb{T}^{2}\rightarrow \mathbb{T}^{2}$  by
\[ T(x,y)=(x+\alpha, y+\phi(x))\mod\mathbb{Z}^{2}.\]
The system $(\mathbb{T}^{2}, T)$ is a skew product system and it is well known that this system is distal, since it is a group extension of a minimal equicontinuous system. The following theorem characterizes the minimality of such skew product system.
\begin{thm}\cite[Chapter 5, Theorem 10]{Au}\label{char of min}
The above defined system $(\mathbb{T}^{2}, T)$ is minimal if and only if for each $k\in \mathbb{Z}\setminus\{0\}$, there is no continuous function $f:\mathbb{T}\rightarrow \mathbb{T}$ such that $f(x+\alpha)=f(x)+k\phi(x)$ for each $x\in\mathbb{T}$.
\end{thm}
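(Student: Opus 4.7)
The plan is to prove both directions of the equivalence by contraposition, exploiting the skew-product (equivalently, $\mathbb T$-extension) structure of $T$.

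For the direction ``cohomology solution exists $\Rightarrow$ not minimal'', I would start with a continuous $f:\mathbb T\to\mathbb T$ satisfying $f(x+\alpha)=f(x)+k\phi(x)$ for some $k\in\mathbb Z\setminus\{0\}$, and define $F:\mathbb T^{2}\to\mathbb T$ by $F(x,y)=ky-f(x)$. A direct computation shows $F\circ T=F$, and since $k\neq 0$ the map $F$ is non-constant along each fiber of the first projection, so any level set $F^{-1}(c)$ is a proper, nonempty, closed, $T$-invariant subset of $\mathbb T^{2}$, contradicting minimality.

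For the converse, I would assume $(\mathbb T^{2},T)$ is not minimal and fix a proper minimal subset $M\subsetneq\mathbb T^{2}$. The fiber translations $L_{t}(x,y)=(x,y+t)$ commute with $T$, so each $L_{t}(M)$ is again minimal, and by minimality of the base rotation $M$ projects onto $\mathbb T$. The union $\bigcup_{t\in\mathbb T}L_{t}(M)$ is compact, $T$-invariant, and saturated under fiber translations, so a fiberwise check shows it equals $\mathbb T^{2}$. The stabilizer $H=\{t:L_{t}(M)=M\}$ is then a closed proper subgroup of $\mathbb T$, hence a finite cyclic group $\tfrac{1}{k}\mathbb Z/\mathbb Z$ for some integer $k\geq 1$. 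Next I would pass to the quotient via the fiberwise $k$-fold cover $q(x,y)=(x,ky)$, which identifies $\mathbb T^{2}/H$ with $\mathbb T^{2}$ and conjugates $T$ to $T'(x,y)=(x+\alpha,\,y+k\phi(x))$. The image $M'=q(M)$ is $T'$-minimal with trivial fiberwise stabilizer, and disjointness of distinct minimal sets forces $M'$ to meet each fiber in exactly one point: if $(x,y_{1}),(x,y_{2})\in M'$ with $y_{1}\neq y_{2}$, then $L_{y_{2}-y_{1}}(M')\cap M'\neq\emptyset$ would yield $y_{2}-y_{1}$ in the trivial stabilizer. Hence $M'$ is the graph of a function $f:\mathbb T\to\mathbb T$, and continuity follows because the first projection restricts to a continuous bijection $M'\to\mathbb T$ between compact Hausdorff spaces. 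Rewriting $T'(M')\subset M'$ as a statement about this graph yields the cohomology equation $f(x+\alpha)=f(x)+k\phi(x)$.

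The main obstacle will be the middle step: classifying the stabilizer $H$ and carrying out the reduction to the trivial-stabilizer case through the $k$-fold fiberwise cover. Once the geometric picture is in place — every minimal set arises as a finite union of $H$-translates of a continuous graph in $\mathbb T^{2}$ — the production of $F$ in one direction and of $f$ in the other becomes a routine unwinding of definitions.
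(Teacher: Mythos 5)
The paper offers no proof of this statement; it is quoted verbatim from Auslander's book (Chapter 5, Theorem 10), so there is nothing internal to compare against. Your argument is correct and is essentially the classical proof of this result: the easy direction via the $T$-invariant function $F(x,y)=ky-f(x)$, and the converse via the action of the fiber translations $L_t$ on the set of minimal subsets, the classification of the stabilizer $H$ as $\tfrac{1}{k}\mathbb Z/\mathbb Z$, and the reduction by $q(x,y)=(x,ky)$ to a minimal set that is a graph. The only steps worth writing out in full are (i) that $H$ is closed, which follows because $L_t(M)\subseteq M$ already forces $L_t(M)=M$ by minimality of $L_t(M)$, so $H=\{t: L_t(M)\subseteq M\}$ is closed; (ii) that $H$ is proper, since $H=\mathbb T$ together with $M$ surjecting onto the base would give $M=\mathbb T^2$; and (iii) that $M'=q(M)$ is again minimal, being the image of a minimal set under the factor map $q$ intertwining $T$ with $T'$. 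With those in place the proposal is a complete and correct proof of the cited theorem.
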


Now we take $\phi: \mathbb{T}\rightarrow \mathbb{T}$ to be $\phi(x)=1-|1-2x|$ for $x\in [0,1)$. By comparing the Fourier coefficients, there is no continuous function $f:\mathbb{T}\rightarrow \mathbb{T}$ such that $f(x+\alpha)=f(x)+k\phi(x)$  for each $k\in\mathbb{Z}\setminus\{0\}$. Thus the system $(\mathbb{T}^{2}, T)$ is minimal by Theorem \ref{char of min}. It is straightforward to calculate that  for each $(x,y)\in\mathbb{T}^{2}$, $Th(x,y)=(x+\alpha+\frac{1}{2}, 1-y+\phi(x+\frac{1}{2}))$ and $hT(x,y)=(x+\alpha+\frac{1}{2}, 1-y-\phi(x))$. Note that
\[\phi(x+\frac{1}{2})=\begin{cases} 1-2x, & x\in[0,1/2)\\ 2x-1, & x\in [1/2,1)\end{cases} \text{ and } -\phi(x)=\begin{cases} -2x, & x\in[0,1/2)\\ 2x-2, & x\in [1/2,1)\end{cases}. \]
Therefore, it follows that $T$ commutes with $h$ and thus the induced homeomorphism $\widetilde{T}$ on $\mathbb{K}^{2}$ is also minimal and distal.



\begin{thebibliography}{HD}




\normalsize
\baselineskip=17pt


\bibitem{Au} J. Auslander, {\it Miniml flows and their extensions}. North-Holland Mathematics Studies, 153. Notas de Matem\'{a}tica [Mathematical Notes], 122. North-Holland Publishing Co., Amsterdam, 1988.

\bibitem{BG} E. Breuillard, T.  Gelander,  {\it On dense free subgroups of Lie groups}. J. Algebra 261 (2003), 448-467.

\bibitem{Br} I. Bron{\v s}te{\v \i}n, {\it Extensions of minimal transformation groups}, Suthoff \& Noordhoff, 1979.

\bibitem{Fu} H. Furstenberg, {\it The Structure of distal flows}. Amer. J. Math. 85 (1963), 477-515.

\bibitem{Kn} A. Knapp, {\it Lie groups beyaond an introduction} (Second Edition). PM 140, Birkhauser, 2002.

\bibitem{MZ} D. Montgomery, L. Zippin, {\it Topological transformation groups}. Interscience Publishers, New York-London, 1955.

\bibitem{Nad} S. Nadler,  {\it Continuum theory}, volume 158 of \textit{Monographs and Textbooks in Pure and Applied Mathematics}. Marcel Dekker, Inc., New York, 1992. An introduction.

\bibitem{Rees} M. Rees, {\it On the structure of minimal distal transformation groups with manifolds as phase spaces}, thesis, University of Warwick, Coventry, England.

\bibitem{Shi} E. Shi, {\it Continua having distal minimal actions by amenable groups}. arXiv:2001.03755.

\bibitem{Sh} E. Shi, {\it Distal higher rank lattice actions on surfaces}. arXiv:2001.01183.

\bibitem{War} F. Warner, {\it Foundations of differentiable manifolds and Lie groups}. Springer-Verlag, GTM 94, 1983.

\end{thebibliography}
\end{document}